\newtheorem{thm}{Theorem}[section]
\newtheorem{lem}[thm]{Lemma}
\theoremstyle{definition}
\theoremstyle{remark}
\def\underset#1\to#2{\mathop{#2}\limits_{#1}{ }}
\def\overset#1\to#2{\mathop{#2}\limits^{#1}{ }}
\numberwithin{equation}{section}
\begin{document}

\title[Geometry of the space of polynomials]{Notes on the geometry of space of polynomials}

\author{Han Ju Lee}
\address{Department of Mathematics, POSTECH, San 31, Hyoja-dong,
Nam-gu, Pohang-shi, Kyungbuk, Republic of Korea, +82-054-279-2712
} \email{hahnju@postech.ac.kr}

\keywords{Complex strictly convex, Polynomials, Symmetric injective
tensor product, Finite representability} \subjclass[2000]{46B20}

\thanks{The author acknowledges the financial support of the Korean
Research Foundation made in the program year of 2002
(KRF-2002-070-C00005)}


\begin{abstract}
We show that the symmetric injective tensor product space
$\hat{\otimes}_{n,s,\varepsilon}E$ is not complex strictly convex if
$E$ is a complex Banach space of $\dim E \ge 2$ and if $n\ge 2$
holds. It is also reproved that $\ell_\infty$ is finitely
represented in $\hat{\otimes}_{n,s,\varepsilon}E$ if $E$ is infinite
dimensional and if $n\ge 2$ holds, which was proved in the other way
in \cite{D}.
\end{abstract}
\maketitle                   





The real geometric properties of spaces of polynomials are discussed
in \cite{BR, RT}. In particular, it is shown that the symmetric
injective tensor product space $\hat{\otimes}_{n,s,\varepsilon}E$ is
not strictly convex if $E$ is a Banach space of $\dim E \ge 2$ and
if $n\ge 2$ holds.

Let $E$ be a Banach space over a real or complex filed and
$E^\prime$ is denoted as the Banach dual of $E$. An element $x$ in
the unit sphere $S_E$ is called a {\it (real) extreme point} of the
unit ball $B_E$ if for some $y\in E$,
\[\|x \pm y\|\le 1\] implies $y=0$. Recall that a Banach space $E$
is said to be {\it strictly convex} if every element of $S_X$ is an
extreme point of $B_E$. Suppose for the moment that $E$ is a complex
Banach space. An element $x$ in the unit sphere $S_E$ is said to be
a {\it complex extreme point} if for some $y\in E$,
\[ \sup\{ \| x+ \zeta y \| \ :\   \zeta\in \mathbf{C},\  |\zeta|=1\}
\le 1\] implies $y=0$. A complex Banach space $E$ is said to be {\it
complex strictly convex} if every point in $S_E$ is a complex
extreme point of $B_E$. Notice that if a complex Banach space is not
complex strictly convex, then it is not strictly convex.

Given a Banach space $E$, the space $\otimes_{n,s} E$ consists of
all tensors of the form \[u=\sum_{j=1}^k \lambda_j x_j\otimes
x_j\otimes x_j\cdots \otimes x_j,\] where $x_j$ are elements in $E$
for $j=1,\ldots, k$ and $\lambda_j$'s are scalars. Now define its
injective norm as
\[ \|u\|_\varepsilon = \sup\left\{
\left| \sum_{j=1}^k\  \lambda_j  \langle \phi, x_j \rangle^n \right|
\ :\ \phi \in B_{E^\prime} \right\}.\] We denote the completion of
$\otimes_{n,s}E$ with this norm by
$\hat{\otimes}_{n,s,\varepsilon}E$. It is clear that
$\hat{\otimes}_{n,s,\varepsilon}E^\prime$ is a closed subspace of
space $\mathcal{P}({}^n E)$ of all $n$-homogeneous polynomials (For
more details, consult \cite{C, D2}.)

\section{Main Results}

We begin with the following useful observation which is a
modification of Theorem~2.2 in \cite{KL}.
\begin{lem}\label{lemfunda1}
Let $F$ be a finite $m$-dimensional Banach space with the
Banach-Mazur distance $d(F,\ell_2^m)\le d$. Then there exist
 $\{x_j\}_{j=1}^m$ in $F$ and $\{\phi_j\}_{j=1}^m$ in $S_{F^\prime}$
such that for each $i=1,\ldots, m$,
\begin{equation}\label{eq0} 1 \le \langle
\phi_i, x_i \rangle \le d,\ \  \langle \phi_j, x_k \rangle=0\ \
\text{ for all different}\ \  j,k,\end{equation}
\begin{equation}\label{eqxs1}  1\le \min_{1\le k \le m} \|x_k\|
\le \max_{1\le k \le m} \|x_k\| =\|x_1\|\le d\end{equation} and for
every real $r\ge 2$,
\begin{equation}\label{eqinter2} \sup\left\{ \left(\sum_{k=1}^m |\langle \phi, x_k
\rangle|^r\right)^{\frac 1r} : \phi\in B_{F^\prime} \right\} =
\|x_1\|.\end{equation}
\end{lem}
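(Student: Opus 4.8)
The plan is to realize $F$ in a good Euclidean position and then transport the whole problem to $\ell_2^m$, where the three requirements become statements about an orthonormal basis. Since $d(F,\ell_2^m)\le d$, I would choose a linear isomorphism $T:\ell_2^m\to F$ normalized so that
\[ \norm{u}_2 \le \norm{Tu} \le d\,\norm{u}_2 \qquad (u\in\ell_2^m), \]
which is possible after scaling because the Banach--Mazur distance is at most $d$. I would then pick $u_1\in S_{\ell_2^m}$ attaining $\norm{Tu_1}=\norm{T}$ (the sphere is compact), extend it to an orthonormal basis $\{u_j\}_{j=1}^m$ of $\ell_2^m$ by Gram--Schmidt, and set $x_j=Tu_j$. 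By construction $\norm{x_1}=\norm{T}$ is the largest of the $\norm{x_k}$.

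For the functionals I would start from the coordinate functionals pulled back through $T$: define $\psi_j\in F^\prime$ by $\langle\psi_j,y\rangle=\langle T^{-1}y,u_j\rangle$, so that $\langle\psi_j,x_k\rangle=\delta_{jk}$, and put $\phi_j=\psi_j/\norm{\psi_j}\in S_{F^\prime}$. The orthogonality $\langle\phi_j,x_k\rangle=0$ for $j\ne k$ in \eqref{eq0} is then immediate, while $\langle\phi_j,x_j\rangle=1/\norm{\psi_j}$ is real and positive; the two-sided bound $1\le\langle\phi_j,x_j\rangle\le d$ follows from $\norm{\psi_j}\le 1$ (a consequence of $\norm{T^{-1}}\le 1$) together with $\norm{\psi_j}\ge 1/\norm{x_j}\ge 1/d$ (test $\psi_j$ on $x_j/\norm{x_j}$). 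The norm estimates \eqref{eqxs1} are equally direct: the normalization of $T$ gives $1\le\norm{x_k}\le d$ for every $k$, and the choice of $u_1$ gives $\norm{x_1}=\norm{T}=\max_k\norm{x_k}$.

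The real content is \eqref{eqinter2}, and this is where I would concentrate the effort. For $\phi\in B_{F^\prime}$ set $g=T^*\phi\in(\ell_2^m)^\prime\cong\ell_2^m$, so that $\langle\phi,x_k\rangle=\langle g,u_k\rangle$. Since $\{u_k\}$ is an orthonormal basis, $\sum_k\abs{\langle g,u_k\rangle}^2=\norm{g}_2^2$, and $\norm{g}_2\le\norm{T^*}\,\norm{\phi}=\norm{T}=\norm{x_1}$. Because $r\ge 2$ forces $\norm{\cdot}_{\ell_r}\le\norm{\cdot}_{\ell_2}$, this yields $(\sum_k\abs{\langle\phi,x_k\rangle}^r)^{1/r}\le\norm{x_1}$, so the supremum is at most $\norm{x_1}$. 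To see that it is attained, take a Hahn--Banach functional $\phi^*\in S_{F^\prime}$ norming $x_1$; then $g=T^*\phi^*$ satisfies $\abs{\langle g,u_1\rangle}=\norm{x_1}$ and $\norm{g}_2\le\norm{x_1}$, so equality in Cauchy--Schwarz forces $g$ to be a scalar multiple of $u_1$ and hence $\langle\phi^*,x_k\rangle=0$ for $k\ne 1$. The $r$-sum then collapses to $\norm{x_1}^r$, giving equality for every $r\ge 2$ simultaneously.

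The step I expect to be the crux is pinning down the supremum in \eqref{eqinter2}: the inequality $\le\norm{x_1}$ hinges on having chosen $x_1$ to realize $\norm{T}$ (otherwise $\norm{g}_2$ is only controlled by $\norm{T}>\norm{x_1}$), and the attainment relies on the rigidity of the Cauchy--Schwarz equality to annihilate all coordinates but the first. Everything else reduces to bookkeeping with the normalization $\norm{u}_2\le\norm{Tu}\le d\,\norm{u}_2$.
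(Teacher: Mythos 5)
Your proof is correct, and it rests on the same analytic core as the paper's --- transporting the problem to $\ell_2^m$ via an isomorphism $T$ with $\|T\|\le d$ and $\|T^{-1}\|\le 1$, then using Parseval's identity together with the monotonicity $\|\cdot\|_{\ell_r}\le\|\cdot\|_{\ell_2}$ for $r\ge 2$ --- but your construction of the biorthogonal system runs in the opposite direction and is genuinely different. The paper builds the \emph{functionals} first: $\phi_1$ is chosen to maximize $\|T^*\phi\|_2$ over $B_{F'}$, then $\phi_2,\ldots,\phi_m$ are successive constrained maximizers of the same quantity, chosen so that the vectors $T^*\phi_k$ are mutually orthogonal; the vectors are defined afterwards by $x_k = T\bigl(\overline{T^*\phi_k}/\|T^*\phi_k\|_2\bigr)$, and both $\max_k\|x_k\|=\|x_1\|$ and \eqref{eqinter2} fall out of a sandwich (Parseval gives $\sup\le\|T^*\phi_1\|_2=\langle\phi_1,x_1\rangle\le\|x_1\|$, while trivially $\sup\ge\max_k\|x_k\|$). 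You build the \emph{vectors} first --- any orthonormal basis whose first element norms $T$ --- and take normalized coordinate functionals, so $\|x_1\|=\|T\|=\max_k\|x_k\|$ holds by construction rather than by the sandwich; the price is that the functional attaining the supremum in \eqref{eqinter2} is not among your $\phi_j$, so you must invoke an extra Hahn--Banach norming functional, whereas in the paper the extremal functional $\phi_1$ is simultaneously a member of the system and the attaining functional. Your route buys simplicity: no iteration, no conjugation bookkeeping, and it shows the orthonormal extension of $u_1$ can be completely arbitrary. Two small remarks: your Cauchy--Schwarz rigidity step is not actually needed, since the terms of the sum are nonnegative and hence $\sup\ge|\langle\phi^*,x_1\rangle|=\|x_1\|$ already; and in the complex case, under the bilinear identification of $(\ell_2^m)'$ with $\ell_2^m$, the equality case makes $g$ proportional to $\overline{u_1}$ rather than to $u_1$, though the conclusion you draw from it ($\langle\phi^*,x_k\rangle=0$ for $k\ne 1$, e.g.\ by Parseval) is correct.
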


\begin{proof}We shall identify $(\ell_2^m)'$ with $\ell_2^m$.  Consider an
isomorphism $T:\ell_2^m \rightarrow F$ with $\|T\|\le d$ and
$\|T^{-1}\|=1$. Since $\|(T^*)^{-1}\|=1$, we get $\|\phi\|\le
\|T^*(\phi)\|_2$ for every $\phi\in F'$. Then there exists $\phi_1
\in S_{F^\prime}$ such that
\[1\le \|T^*(\phi_1)\|_2 = \max\{ \|T^*(\phi)\|_2 : \phi \in B_{F^\prime}
\} \le d.\] Then there exists $\phi_2\in S_{F^\prime}$ such that
\begin{align*}1&\le \|T^*(\phi_2)\|_2\\& = \max\left\{ \|T^*(\phi)\|_2 : \phi \in
B_{F^\prime}, \ \ \langle T^*(\phi), \overline{T^*(\phi_1)} \rangle
= \langle T^*(\phi), T^*(\phi_1) \rangle_2 =0 \
\right\},\end{align*} where $\langle \cdot, \cdot \rangle_2$ is the
standard inner product in $\ell_2^m$ and $\overline{x}$ denotes
$\{\overline{x(k)}\}_{k=1}^m$ if $x= \{x(k)\}_{k=1}^m$.

In this way, we obtain $m$ vectors $\{\phi_k\}_{k=1}^m$ in
$S_{F^\prime}$ such that $\{T^*(\phi_k)\}_{k=1}^m$ are orthogonal.
Taking $w_k = \overline{T^*(\phi_k)}/\|T^*(\phi_k)\|_2$ for each
$k=1,\ldots, m$, let $x_k = T(w_k)$ for each $k=1,\ldots, m$. Notice
that
\[ \langle \phi_i, x_j \rangle = \langle \phi_i, T(w_j) \rangle =
\langle T^*(\phi_i), w_j \rangle = \|T^*(\phi_j)\|_2\delta_{ij},\]
where $\delta_{ij} = 1$ if $i=j$ and $\delta_{ij}=0$ if $i\neq j$.
 So (1) is satisfied.

Since $\{w_k\}_{k=1}^m$ is an orthonormal basis in $\ell_2^m$, we
get for each $\phi \in B_{F^\prime}$
\begin{align*}\left(\sum_{k=1}^m |\langle \phi, x_k \rangle|^2
\right)^{\frac12} &= \left(\sum_{k=1}^m |\langle \phi, T(w_k)
\rangle|^2 \right)^{\frac12} \\&=  \left(\sum_{k=1}^m |\langle
T^*(\phi), w_k \rangle|^2 \right)^{\frac12} = \|T^*(\phi)\|_2 \le
\|T^*(\phi_1)\|_2.
\end{align*}
Hence we have
\[ \sup\left\{ \left(\sum_{k=1}^m |\langle \phi, x_k \rangle |^r\right)^{\frac 1r}  : \phi\in
B_{F^\prime} \right\} = \|T^*(\phi_1)\|_2 =\langle \phi_1, x_1
\rangle \le \|x_1\|.\] It is also clear that
\[ \sup\left\{ \left(\sum_{k=1}^m |\langle \phi, x_k \rangle |^r\right)^{\frac 1r}  : \phi\in
B_{F^\prime} \right\} \ge \max_{1\le j\le m} \|x_j\|.\] Notice also
that $ 1\le \|x_j\| = \|T(w_j)\| \le d$ for every $1\le j\le m$.
Hence we have obtained $m$ vectors $\{x_1, \ldots, x_m\}$ satisfying
equations (\ref{eqxs1}) and (\ref{eqinter2}).
\end{proof}

Boyd and Ryan showed in \cite{BR} that
 $\hat{\otimes}_{n,s,\varepsilon}E$ is not real strictly convex
for every $n\ge 2$ and $\dim E\ge 2$. In the case of a complex
Banach space we have the following

\begin{thm}
Let $E$ be a real (resp. complex) Banach space of $\dim \ge 2$. Then
for $n\ge 2$ the space $\hat{\otimes}_{n,s,\varepsilon}E$ and
$\mathcal{P}({}^n E)$ are not real (resp. complex) strictly convex.
\end{thm}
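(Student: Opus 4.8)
The plan is to reduce the statement for $\hat{\otimes}_{n,s,\varepsilon}E$ to the single inequality \eqref{eqinter2} of Lemma~\ref{lemfunda1}, applied with $m=2$ and $r=n$, and then to pass to $\mathcal{P}({}^{n}E)$ by a subspace argument. First I would fix a two-dimensional subspace $F\subseteq E$ and apply Lemma~\ref{lemfunda1} to obtain $x_1,x_2\in F$ and $\phi_1,\phi_2\in S_{F'}$ satisfying \eqref{eq0}--\eqref{eqinter2}. Since $n\ge 2$, the one consequence I really need is the flatness
\[
\sup_{\phi\in B_{F'}}\bigl(\abs{\langle\phi,x_1\rangle}^{n}+\abs{\langle\phi,x_2\rangle}^{n}\bigr)=\norm{x_1}^{n}.
\]
I would then take the candidate point and perturbation
\[
u=\frac{1}{\norm{x_1}^{n}}\,x_1\otimes\cdots\otimes x_1,\qquad y=\frac{1}{\norm{x_1}^{n}}\,x_2\otimes\cdots\otimes x_2,
\]
each an $n$-fold tensor, noting $y\neq 0$ because $x_2\neq 0$ by \eqref{eqxs1}.

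Because $x_1,x_2\in F$, the quantities $\langle\phi,x_j\rangle$ depend only on $\phi|_F$, and every element of $B_{F'}$ extends by Hahn--Banach to an element of $B_{E'}$ of the same norm; hence each injective-norm supremum over $B_{E'}$ may be computed over $B_{F'}$. In particular $\norm{u}_\varepsilon=\norm{x_1}^{n}/\norm{x_1}^{n}=1$, so $u$ lies on the unit sphere. For the complex case I would interchange the two suprema and use $\sup_{\abs\zeta=1}\abs{a+\zeta b}=\abs a+\abs b$ to obtain
\[
\sup_{\abs\zeta=1}\norm{u+\zeta y}_\varepsilon=\frac{1}{\norm{x_1}^{n}}\sup_{\phi\in B_{F'}}\bigl(\abs{\langle\phi,x_1\rangle}^{n}+\abs{\langle\phi,x_2\rangle}^{n}\bigr)=1,
\]
so $u$ is not a complex extreme point. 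For the real case the triangle inequality gives $\abs{\langle\phi,x_1\rangle^{n}\pm\langle\phi,x_2\rangle^{n}}\le\abs{\langle\phi,x_1\rangle}^{n}+\abs{\langle\phi,x_2\rangle}^{n}$, whence $\norm{u\pm y}_\varepsilon\le 1$, so $u$ is not a real extreme point. Either way $\hat{\otimes}_{n,s,\varepsilon}E$ fails the corresponding strict convexity.

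For $\mathcal{P}({}^{n}E)$ I would avoid building polynomials by hand and instead exploit that $\hat{\otimes}_{n,s,\varepsilon}E'$ sits as a closed subspace of $\mathcal{P}({}^{n}E)$, as recorded just before Section~1. Since $\dim E\ge 2$ forces $\dim E'\ge 2$, the argument above applied to the Banach space $E'$ shows that $\hat{\otimes}_{n,s,\varepsilon}E'$ is not (complex) strictly convex. As (complex) strict convexity is inherited by subspaces carrying the restricted norm, its failure propagates upward, so $\mathcal{P}({}^{n}E)$ is not (complex) strictly convex either.

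The only genuine content is \eqref{eqinter2}: it is precisely the equality $\sup_{\phi}(\abs{\langle\phi,x_1\rangle}^{n}+\abs{\langle\phi,x_2\rangle}^{n})=\norm{x_1}^{n}$ that prevents either perturbation $\pm y$ or $\zeta y$ from enlarging the norm, and everything else is formal. The step demanding the most care is the treatment of $\mathcal{P}({}^{n}E)$: a direct construction there would require the polynomial norm, a supremum over all of $B_E$, to be controlled by data living on the subspace $F$, which needs a Goldstine-type density argument. Routing through the isometric embedding of $\hat{\otimes}_{n,s,\varepsilon}E'$ sidesteps this difficulty, at the cost of relying on the embedding being isometric and on the observation that $\dim E'\ge 2$.
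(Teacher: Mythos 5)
Your proposal is correct and follows essentially the same route as the paper: Lemma~\ref{lemfunda1} applied to a two-dimensional subspace with $m=2$, $r=n$, the same unit tensor $u=x_1\otimes\cdots\otimes x_1/\norm{x_1}^n$ and perturbation $x_2\otimes\cdots\otimes x_2/\norm{x_1}^n$, with \eqref{eqinter2} forcing $\norm{u+\zeta y}_\varepsilon\le 1$, and the passage to $\mathcal{P}({}^n E)$ via the isometric embedding of $\hat{\otimes}_{n,s,\varepsilon}E'$ applied to $E'$ (which is exactly what the paper's remark ``it is enough to show the tensor product case'' tacitly invokes). Your extra details---the Hahn--Banach reduction of the supremum from $B_{E'}$ to $B_{F'}$, the explicit real-case triangle inequality, and the upward propagation of non-strict-convexity from subspaces---merely spell out steps the paper leaves implicit.
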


\begin{proof}
We shall prove the complex case. The proof can be easily applied to
the real case. It is enough to show that
$\hat{\otimes}_{n,s,\varepsilon}E$ is not complex strictly convex.
For each two dimensional subspace $F$ of $E$, by applying
Lemma~\ref{lemfunda1}, we have two vectors $x_1, x_2$ such that
$1\le \|x_2\| \le \|x_1\|$ and
\[ \sup_{\phi\in B_{F^\prime}}  \left(|\langle \phi,
x_1 \rangle|^n+ |\langle \phi, x_2 \rangle|^n\right) = \|x_1\|^n.\]

Now let \[u =\frac{1}{\|x_1\|^n}\  x_1\otimes x_1 \otimes \cdots
\otimes x_1,\]
\[v =\frac{1}{\|x_1\|^n}\ x_2\otimes
x_2 \otimes \cdots \otimes x_2.\] Then $\|u\|_\varepsilon =1$ and
$v\neq 0$. Notice that for each $|\zeta|\le 1$,
\[\|u + \zeta v \|_\varepsilon = \frac{1}{\|x_1\|^n}
\sup_{\phi \in B_{E^\prime}} \left|\langle \phi, x_1 \rangle^n +
\zeta \langle \phi, x_2 \rangle^n \right| \le 1. \] Therefore $u$ is
not a complex extreme point. The proof is done.
\end{proof}

In the following theorem we reprove the Dineen's result in \cite{D}
by applying Lemma~\ref{lemfunda1}.

\begin{thm}\cite{D}
Let $E$ be an infinite dimensional Banach space. Then $\ell_\infty$
is finitely represented in $\hat{\otimes}_{n,s,\varepsilon}E$ and
$\mathcal{P}({}^n E)$ for every $n\ge 2$.
\end{thm}
\begin{proof}
By the Dvoretzky theorem \cite{Dv} we can choose for any positive
integer $m$ and any $\epsilon>0$ an $m$ dimensional subspace $F$ in
$E$ with $d(F, \ell_2^m) \le 1 + \epsilon$. By
Lemma~\ref{lemfunda1}, we have $m$ elements $\{x_j\}$ in $F$ and
$\{\phi_j\}$ in $S_{F^\prime}$ satisfying (\ref{eq0}), (\ref{eqxs1})
and (\ref{eqinter2}) with $d=1 + \epsilon$. For any
$(\alpha_i)_{i=1}^m\in S_{\ell_\infty^m}$, using the Hahn-Banach
extension,
\begin{align*}
\left\|\sum_{i=1}^m \alpha_i\ x_i \otimes \cdots \otimes
x_i\right\|_\varepsilon &=\sup_{\phi\in B_{E^\prime}}
\left|\sum_{i=1}^m
\alpha_i\langle \phi, x_i\rangle^n \right|\\
&\ge \max_{1\le j\le m} |\alpha_j|\langle\phi_j, x_j\rangle \ge 1,
\end{align*} and we also get
\begin{align*}
\left\|\sum_{i=1}^m \alpha_i\ x_i \otimes \cdots \otimes
x_i\right\|_\varepsilon &=\sup_{\phi\in B_{E^\prime}}
\left|\sum_{i=1}^m
\alpha_i\langle \phi, x_i\rangle^n \right|\\
&\le \sup_{\phi \in B_{E^\prime}}\sum_{i=1}^m |\alpha_i||\langle
\phi, x_i\rangle|^n \le \|x_1\|^n \le (1+\epsilon)^n.
\end{align*} The proof is complete.
\end{proof}

\end{document}